\DeclareMathOperator{\soc}{soc}
\newtheorem{thm}{Theorem}[section]
\newtheorem{lemma}[thm]{Lemma}
\newtheorem{question}[thm]{Question}
\numberwithin{equation}{section}
\begin{document}
	\bibliographystyle{amsplain}

	\title[Maximal blocks]{A polynomial bound for the number of maximal systems of imprimitivity of a finite transitive permutation group}

\author[A. Lucchini]{Andrea Lucchini}
\address{Andrea Lucchini, Dipartimento di Matematica \lq\lq Tullio Levi-Civita\rq\rq,\newline
 University of Padova, Via Trieste 53, 35121 Padova, Italy} 
\email{lucchini@math.unipd.it}
         
\author[M. Moscatiello]{Mariapia Moscatiello}
\address{Mariapia Moscatiello, Dipartimento di Matematica \lq\lq Tullio Levi-Civita\rq\rq,\newline
 University of Padova, Via Trieste 53, 35121 Padova, Italy} 
\email{mariapia.moscatiello@math.unipd.it}

\author[P. Spiga]{Pablo Spiga}
\address{Pablo Spiga, Dipartimento di Matematica Pura e Applicata,\newline
 University of Milano-Bicocca, Via Cozzi 55, 20126 Milano, Italy} 
\email{pablo.spiga@unimib.it}
\subjclass[2010]{primary 20E28; secondary 20B15, 20F05}
 	\keywords{Wall conjecture; maximal subgroups; permutation groups; systems of imprimitivity}        
	\maketitle

        \begin{abstract}We show that, there exists a constant $a$ such that,  for every subgroup $H$ of a finite group $G$, the number of maximal subgroups of $G$ containing $H$ is bounded above by $a|G:H|^{3/2}$. In particular, a transitive permutation group of degree $n$ has at most $an^{3/2}$ maximal systems of imprimitivity. When $G$ is soluble, generalizing a classic result of Tim Wall, we prove a much stroger bound, that is, 
the number of maximal subgroups of $G$ containing $H$ is at most $|G:H|-1$.
          \end{abstract}
\section{Introduction}
Tim Wall in 1961~\cite{Wall} has conjectured that the number of maximal subgroups of a finite group $G$ is less than the group order $|G|$. Wall himself proved the conjecture under the additional hypothesis that $G$ is soluble. The first remarkable progress towards a good understanding of Wall's conjecture is due to Liebeck, Pyber and Shalev~\cite{LPS}; they proved that all, but (possibly) finitely many, simple groups satisfy Wall's conjecture. Actually, Liebeck, Pyber and Shalev prove~\cite[Theorem~$1.3$]{LPS} a polynomial version of Wall's conjecture: there exists an absolute constant $c$ such that, every finite group $G$ has at most $c|G|^{3/2}$ maximal subgroups. Based on the conjecture of Guralnick on the dimension of certain first cohomology groups~\cite{Guralnick} and on some  computer computations of Frank L\"{u}beck, Wall's conjecture was disproved in 2012 by the participants of an AIM workshop, see~\cite{AIM}.

The question of Wall can be generalised in the context of finite permutation groups and this was done by Peter Cameron, see~\cite{Cameron}. (See~\cite{Cameron} also for the motivation for this question.)
\begin{question}[Cameron~\cite{Cameron}]\label{q:1}Is the number of maximal blocks of imprimitivity through a point for a transitive group $G$ of degree $n$ bounded above by a polynomial of degree n? Find the best bound! 
\end{question} 
To see that this question extends naturally the question of Wall we fix some notation. Given a finite group $G$ and a subgroup $H$ of $G$, we denote by $$\max(H,G):=|\{M\mid M \textrm{ maximal subgroup of }G \textrm{ with }H\le M\}|,$$ the number of maximal subgroups of $G$ containing $H$.  Now, if $\Omega$ is the domain of a transitive permutation group $G$ and $\omega\in \Omega$, then there exists a one-to-one correspondence between the maximal systems of imprimitivity of $G$ and the maximal subgroups of $G$ containing the point stabiliser $G_\omega$ and hence Question~\ref{q:1} asks for a polynomial upper bound  for $\max(G_\omega,G)$ as a function of $n=|G:G_\omega|$. When $n=|G|$, that is, $G$ acts regularly on itself, the question of Cameron reduces to the question of Wall and~\cite[Theorem~$1.3$]{LPS} yields a positive solution in this case, with exponent $3/2$.  

The main result of this paper is a positive solution to Question~\ref{q:1}.
\begin{thm}\label{maing}
There exists a constant $a$ such that, for every  finite group $G$ and for every subgroup $H$ of $G$, we have $\max(H,G)\le a|G:H|^{3/2}$.
In particular, a transitive permutation group of degree $n$ has at most $an^{3/2}$ maximal systems of imprimitivity.
\end{thm}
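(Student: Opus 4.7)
The plan is to induct on $|G|$ after reducing to the case where $H$ is core-free in $G$. Since every maximal subgroup of $G$ containing $H$ automatically contains the core $H_G = \bigcap_{g \in G} H^g$, we may replace $(G, H)$ with $(G/H_G, H/H_G)$ without changing either $\max(H, G)$ or $|G:H|$. So assume $H_G = 1$, in which case $G$ acts faithfully on $G/H$.

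For the inductive step, fix a minimal normal subgroup $N$ of $G$; core-freeness gives $N \not\le H$. Split the maximal overgroups of $H$ according to whether they contain $N$:
\[
\max(H,G) \;=\; A + B,
\]
where $A$ counts those with $N \le M$ and $B$ those with $N \not\le M$. The subgroups counted by $A$ biject with the maximal overgroups of $HN/N$ in $G/N$, so the induction hypothesis yields $A \le a|G:HN|^{3/2}$. Each $M$ counted by $B$ satisfies $MN = G$ by maximality, and the core of the argument is to bound $B$ by approximately $a(|G:H|^{3/2} - |G:HN|^{3/2})$.

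If $N$ is elementary abelian, then since $N$ is abelian and $MN = G$, one checks that $M \cap N$ is normal in $G$; minimality of $N$ then forces $M \cap N = 1$, so $M$ is a complement to $N$. The inclusion $H \le M$ forces $H \cap N = 1$, giving $|G:HN| = |G:H|/|N|$ and a gap of order $|N|^{3/2}$. Complements of $N$ containing $H$ correspond, via a fixed reference complement $M_0 \supseteq H$, to cocycles $f \in Z^1(M_0, N)$ vanishing on $H$; their number is at most $|C_N(H)|$ times a relative $|H^1|$. Applying the polynomial cohomology estimates that underlie \cite[Theorem~1.3]{LPS} then yields $B \le c'|N|^{3/2}$, which fits into the available gap for a suitably large absolute constant $a$. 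If $N$ is non-abelian, then $N = T_1 \times \cdots \times T_k$ with each $T_i$ isomorphic to a fixed non-abelian simple group, and $M$ belongs to one of the familiar O'Nan--Scott families (diagonal, product action, twisted wreath, etc.); in each family $M$ is parametrised by bounded combinatorial data together with a maximal subgroup of an almost simple section, to which \cite[Theorem~1.3]{LPS} applied to the section gives the required polynomial bound.

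The main obstacle I expect is the abelian case when $|N|$ is a small prime: here the induction leaves essentially no slack, and the estimate is entirely forced through the cohomological bound on $Z^1(G/N, N)$. This is exactly the regime where Wall's original conjecture was shown to fail, so an LPS-strength bound on $H^1$ is precisely what rescues the $3/2$-exponent.
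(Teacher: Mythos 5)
Your overall skeleton (reduce to core-free $H$, split the maximal overgroups of $H$ along a minimal normal subgroup $N$, handle the ones containing $N$ by induction in $G/N$) is not the paper's decomposition, and the step where your version genuinely breaks is the abelian case. You claim that the number $B$ of complements of $N$ containing $H$ is at most $c'|N|^{3/2}$ by the cohomological estimates behind \cite[Theorem~1.3]{LPS}. Those estimates (of the shape $\dim H^1(X,V)\le\tfrac{1}{2}\dim V$) require $V$ to be a \emph{faithful} irreducible module, whereas here $G/N$ need not act faithfully on $N$: restriction to $C_{M_0}(N)$ embeds $\mathrm{Hom}_{M_0}(C_{M_0}(N),N)$ into the group of derivations, and its order grows with the multiplicity of $N$ as a chief factor of $G$. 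Concretely, let $G=\mathbb{F}_5^{\,k}\rtimes C_4$ with $C_4$ acting by a fixed scalar of order $4$ on every coordinate and $H=C_4$. Then $H$ is core-free, the maximal subgroups containing $H$ are exactly the $C\rtimes H$ with $C$ a hyperplane of $\mathbb{F}_5^{\,k}$, and the number of them avoiding a fixed minimal normal subgroup $N\cong C_5$ is $5^{k-1}$, which is unbounded, while $|N|^{3/2}=5^{3/2}$. (The total count $(5^k-1)/4$ is of course still below $a|G:H|^{3/2}=a\,5^{3k/2}$, and in this example $5^{k-1}$ even fits inside your ``gap'' $a|G:H|^{3/2}-a|G:HN|^{3/2}$; but you have no argument that the relative $Z^1$ is always dominated by that gap, and controlling it is exactly the problem of controlling the multiplicity $\delta_G(N)$.) The paper avoids this by never isolating a single minimal normal subgroup: it uses the crown machinery (Lemmas~\ref{sotto} and~\ref{sotto2}) to write $I=R\times D$, splits the maximal overgroups of $H$ by whether they contain $D$, shows that the remaining ones all contain $R$, and then either inducts via $HR$ or lands in the case $G\cong L_k$, where \emph{all} $\delta_G(A)$ equivalent copies of the chief factor are treated at once and the count $(q^k-1)/(q-1)$ of maximal submodules is weighed against $|I|=q^{kk'}$, which is forced to divide $|G:H|$.

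The non-abelian case of your proposal is also missing its essential quantitative ingredient. A maximal $M\ge H$ with $MN=G$ need not be core-free, so Theorem~\ref{pyber} must be applied to $G/\mathrm{core}_G(M)$; one then needs both a bound on the number of possible cores and a lower bound on $|G:M|$ (equivalently, an upper bound on $|G:\mathrm{core}_G(M)|$) in terms of $|G:H|$, so that the sum of $c\,z^{3/2}$ over admissible indices $z$ comes out as $O(|G:H|^{3/2})$. The paper obtains the first by showing each such $M$ contains all but at most two minimal normal subgroups (hence at most $k^2$ cores), and the second from Lemma~\ref{lemma:new}, which yields $|I\cap M:I\cap H|\ge 5^{k-2}$ and hence $|G:\mathrm{core}_G(M)|\le |G:H|/5^{k-2}$. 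An appeal to O'Nan--Scott families ``parametrised by bounded combinatorial data'' supplies neither: the number of product-action or diagonal-type maximal subgroups with a given socle is not absolutely bounded, and without relating $|G:M|$ to $|G:H|$ the resulting estimate is not of the right order in $|G:H|$.
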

In the case of soluble groups we actually obtain a much tighter bound, which extends the result of Wall~\cite[(8.6), page~$58$]{Wall}  for soluble groups on his own conjecture.
\begin{thm}\label{mains}
If $G$ is a finite soluble group and $H$ is a proper subgroup of $G$, then $\max(H,G)\le |G:H|-1$.
In particular, a soluble transitive permutation group of degree $n\ge 2$ has at most $n-1$ maximal systems of imprimitivity.
\end{thm}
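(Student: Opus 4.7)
The plan is to proceed by induction on $|G|$. Choose a minimal normal subgroup $N$ of $G$; since $G$ is soluble, $N$ is an elementary abelian $p$-group for some prime $p$. I split into three cases depending on how $N$ and $H$ interact.

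If $N\le H$, the correspondence theorem identifies $\max(H,G)$ with $\max(H/N,G/N)$ and induction on $G/N$ gives $\max(H,G)\le|G:H|-1$. If $N\not\le H$ and $HN=G$, then $H\cap N$ is normalised by $H$ and by the abelian $N$, hence by $HN=G$; minimality of $N$ forces $H\cap N=1$, so $H$ is a complement to $N$, and the same invariance argument applied to $M\cap N$ for any $H\le M<G$ shows that $H$ is itself maximal in $G$. Hence $\max(H,G)=1\le|N|-1=|G:H|-1$. The essential case is $N\not\le H$ with $HN<G$: partition the maximal subgroups containing $H$ into (i) those containing $N$ (equivalently, $HN$), bounded by $|G:HN|-1$ via induction on $G/N$, and (ii) those not containing $N$; by maximality such an $M$ satisfies $MN=G$, so $M\cap N$ is $G$-invariant and therefore trivial, making $M$ a complement of $N$ in $G$ containing $H$. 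Writing $c(H)$ for the number of such complements, I obtain $\max(H,G)\le(|G:HN|-1)+c(H)$, so the induction closes provided
\[ c(H)\le|G:HN|(|N|-1). \]

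The main obstacle is this complement estimate. If $c(H)=0$ there is nothing to prove; otherwise fix a complement $K_0\supseteq H$, so that $H\cap N=1$, $|K_0:H|=|G:HN|$, and the complements of $N$ in $G$ containing $H$ are parameterised by the group $Z^1_H(K_0,N)$ of $1$-cocycles $\phi\colon K_0\to N$ that vanish on $H$. Applying $\mathrm{Hom}_{K_0}(-,N)$ to the short exact sequence $0\to\bar{I}\to\mathbb{F}_p[K_0/H]\to\mathbb{F}_p\to 0$ of $\mathbb{F}_p[K_0]$-modules and invoking the Shapiro-type adjunction $\mathrm{Ext}^{*}_{K_0}(\mathbb{F}_p[K_0/H],N)=H^{*}(H,N)$ yields the exact sequence
\[ 0\to N^{K_0}\to N^H\to Z^1_H(K_0,N)\to H^1(K_0,N)\to H^1(H,N), \]
whence $|Z^1_H(K_0,N)|=(|N^H|/|N^{K_0}|)\cdot|\ker(\mathrm{res})|$. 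In the trivial-action sub-case, irreducibility of $N$ forces $N\cong\mathbb{F}_p$, and $|Z^1_H(K_0,N)|$ reduces to the number of homomorphisms $K_0\to\mathbb{F}_p$ vanishing on $H$, easily bounded by $|K_0:H|(p-1)$. The nontrivial-action sub-case is the technical heart: here $N^{K_0}=0$, and one must combine the displayed identity with a sharp Wall-type cohomological bound exploiting both the solvability of $K_0$ and the irreducibility of $N$, refining Wall's original $(8.6)$ (the case $H=1$) to accommodate the extra constraint imposed by $H$. I expect most of the paper's technical investment to be concentrated in this last step.
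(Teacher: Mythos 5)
Your reduction is clean and correct as far as it goes: the induction on $|G|$, the three cases according to the position of a minimal normal subgroup $N$, the observation that a maximal $M\ge H$ with $N\not\le M$ must satisfy $M\cap N=1$ and hence be a complement of $N$, and the identity $|Z^1_H(K_0,N)|=\bigl(|N^H|/|N^{K_0}|\bigr)\cdot|\ker(\mathrm{res})|$ are all fine. But the argument does not close: the entire theorem has been reduced to the inequality $c(H)\le |G:HN|\,(|N|-1)$, and in the non-trivial-action case you do not prove it --- you state that ``one must combine the displayed identity with a sharp Wall-type cohomological bound'' and defer it. That inequality is exactly where the difficulty lives. Note that it is \emph{not} a formal consequence of the statement being proved (the theorem only gives $c(H)\le |G:HN|\cdot|N|-1$, which is weaker by the additive term $|G:HN|-1$), and it is not a consequence of Wall's $(8.6)$ either. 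Establishing it requires, at a minimum, the vanishing $H^1(X,V)=0$ for a soluble group $X$ acting faithfully and irreducibly on $V$, together with a genuinely separate analysis of the non-faithful situation (where $H^1(K_0,N)$ embeds into $\mathrm{Hom}_{K_0}(C_{K_0}(N),N)$ and need not vanish), and a control of $|N^H|\cdot|\ker(\mathrm{res})|$ against $|K_0:H|$. None of this is supplied, so the proof is incomplete at its decisive step.

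For comparison, the paper avoids cocycle counting altogether. It inducts on $|G:H|+|G|$, first reducing to the case where $H$ is an intersection of maximal subgroups with trivial core and $\mathrm{Frat}(G)=1$, and then uses the crown machinery to replace $G$ by a crown-based power $N^k\rtimes K$ with $H\cap N^k=1$. In that situation a maximal subgroup supplementing the socle has the form $C\rtimes K^x$ with $C$ a maximal submodule of $N^k$ and $x\in N^k$, because in a soluble primitive group all complements of the socle are conjugate; this immediately gives the count $\frac{q^k-1}{q-1}\cdot q^{k'}$, which is then balanced against the inductive bound for the maximal subgroups containing the socle. If you want to salvage your route, the missing complement estimate is the statement you must prove; as written, your case (2) and the trivial-action sub-case are the only parts of the hard case that are actually complete.
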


\section{Preliminaries}

We start by reviewing some basic results on  $G$-groups, on monolithic primitive groups and on crowns tailored to our proof of Theorem~\ref{maing}.  For the first part we follow~\cite{crowns}, for the second part we follow~\cite{paz} and for the third part we follow~\cite[Chapter~1]{classes} and~\cite{crowns}. This section will also help for setting some notation. All groups in this paper are finite.

\subsection{Monolithic primitive groups and crown-based power}Recall that an \emph{abstract} group $L$ is said to be \emph{primitive} if it has a maximal subgroup with trivial core. Incidentally, given a group $G$ and a subgroup $M$ be denote by $$\mathrm{core}_G(M):=\bigcap_{g\in G}M^g$$ the core of $M$ in $G$. The \emph{socle} $\mathrm{soc}(L)$ of a primitive group $L$ is either a minimal normal subgroup, or the direct product of two non-abelian minimal normal subgroups. A primitive group $L$ is said to be \emph{monolithic} if the first case occurs, that is, $\mathrm{soc}(L)$ is a minimal normal subgroup of $L$ and hence (necessarily) $L$ has a unique minimal normal subgroup.

Let $L$ be a monolithic primitive group and let $A:=\mathrm{soc}(L)$.  For each positive integer $k$,
let $L^k$ be the $k$-fold direct
product of $L$. The \emph{crown-based power} of $L$ of size  $k$ is the subgroup $L_k$ of $L^k$ defined by
$$L_k:=\{(l_1, \ldots , l_k) \in L^k  \mid l_1 \equiv \cdots \equiv l_k \pmod A \}.$$
Equivalently, if we denote by $\mathrm{diag}(L^k)$ the diagonal subgroup of $L^k$, then $L_k=A^k \mathrm{diag} (L^k)$.

\smallskip

For the proof of the next lemma we need some basic terminology, which we borrow from~\cite[Section~4.3 and~4.4]{praeger}. Let $\kappa$ be a positive integer and let $A$ be a direct product $S_1\times \cdots \times S_\kappa$, where the $S_i$s are pair-wise isomorphic non-abelian simple groups. We denote by $\pi_i:A\to S_i$ the natural projection onto $S_i$. A subgroup $X$ of $A$ is said to be a \emph{strip}, if $X\ne 1$ and, for each $i\in \{1,\ldots,\kappa\}$, either $X\cap \mathrm{Ker}(\pi_i)=1$ or $\pi_i(X)=1$. The support of the strip $X$ is the set $\{i\in \{1,\ldots,\kappa\}\mid \pi_i(X)\ne 1\}$. The strip $X$ is said to be \emph{full} if $\pi_i(X)=S_i$, for all $i$ in the support of $X$. Two strips $X$ and $Y$ are \emph{disjoint} if their supports are disjoint. A subgroup $X$ of $A$ is said to be a \emph{subdirect} subgroup if, for each $i\in \{1,\ldots,\kappa\}$, $\pi_i(X)=S_i$.

Scott's lemma (see for instance~\cite[Theorem~4.16]{praeger}) shows (among other things) that if $X$ is a subdirect subgroup of $A$, then $X$ is a direct product of pairwise disjoint full strips of $A$.

\begin{lemma}\label{lemma:new}
Let $L_{k'}$ be a crown-based power of $L$ of size $k'$ having non-abelian socle $N^{k'}$ and let $H'$ be a core-free subgroup of $L_{k'}$ contained in $N^{k'}$. Then $|N^{k'}:H'|\ge 5^{k'}$. 
\end{lemma}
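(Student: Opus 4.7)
The plan is to induct on $k'$, the base $k' = 1$ being immediate: every proper subgroup $M$ of $N = T^r$ (with $T$ a non-abelian finite simple group) has index at least $5$, because the action of $N$ on $N/M$ factors through the faithful action of the non-trivial quotient $N/\mathrm{core}_N(M) \cong T^s$ (with $s \ge 1$) on $|N:M|$ points, forcing $|T| \le (|N:M|)!$ and hence $|N:M| \ge 5$. Note also that the normal subgroups of $L_{k'}$ contained in $N^{k'}$ are precisely the products $\prod_{i \in I} N_i$, so core-freeness of $H'$ is equivalent to $N_i \not\le H'$ for every $i \in \{1, \dots, k'\}$.

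\textbf{Subdirect case.} Suppose $\pi_i(H') = N_i$ for every $i$. By Scott's lemma, $H' = X_1 \times \cdots \times X_m$ where the $X_k$ are full strips with disjoint supports $\Sigma_1, \dots, \Sigma_m$ partitioning the set of $rk'$ simple factors of $N^{k'}$, and $|N^{k'}:H'| = \prod_k |T|^{|\Sigma_k|-1}$. Let $O_i$ denote the set of $r$ simple factors of $N_i$ and $o_k := |\{i : \Sigma_k \cap O_i \ne \emptyset\}|$, and distribute $|\Sigma_k| - 1 = \sum_{i : \Sigma_k \cap O_i \ne \emptyset}(|\Sigma_k \cap O_i| - 1/o_k)$. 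Since $N_i \not\le H'$, some strip $X_{k_0}$ meeting $O_i$ has $|\Sigma_{k_0}| \ge 2$ (otherwise every simple factor of $N_i$ would be a singleton strip and $N_i$ itself would lie in $H'$); whether $X_{k_0}$ is within-orbit ($o_{k_0}=1$, contribution $\ge 1$) or straddles ($o_{k_0} \ge 2$, contribution $\ge 1 - 1/o_{k_0} \ge 1/2$), the contribution to orbit $O_i$ is at least $1/2$. Summing gives $\sum_k(|\Sigma_k|-1) \ge k'/2$, whence $|N^{k'}:H'| \ge |T|^{k'/2} \ge 60^{k'/2} > 5^{k'}$.

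\textbf{Non-subdirect case.} Let $B := \{i : \pi_i(H') \ne N_i\} \ne \emptyset$ and $A := \{1,\dots,k'\} \setminus B$; set $H^* := \pi_A(H')$, $A_0 := \{j \in A : N_j \le H^*\}$, $A_1 := A \setminus A_0$, and write $N^S := \prod_{i \in S} N_i$ for $S \subseteq \{1, \dots, k'\}$. If $A_1 \ne \emptyset$, set $C := A_0 \cup B$ and factor $|N^{k'}:H'| = |N^{A_1}:H^{***}| \cdot |N^C:H^*_C|$, where $H^{***} := H^*/N^{A_0} \le N^{A_1}$ and $H^*_C := H' \cap N^C$: the subgroup $H^{***}$ is subdirect in $N^{A_1}$ and contains no $N_k$ for $k \in A_1$, so the subdirect-case bound yields $|N^{A_1}:H^{***}| \ge 5^{|A_1|}$; and from $(\bigcap_g H'^g) \cap N^C = 1$ one sees that $H^*_C$ is core-free in $L_{|C|}$, so the inductive hypothesis gives $|N^C:H^*_C| \ge 5^{|C|}$ (valid since $|C| < k'$). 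If instead $A_0 = A$, so $H^* = N^A$ and $\pi_A : H' \to N^A$ is surjective, Goursat's lemma applied to $H' \le N^A \times N^B$ gives $N^A/A^* \cong H^{**}/B^*$ with $A^* := H' \cap N^A$, $B^* := H' \cap N^B$, $H^{**} := \pi_B(H')$; consequently $|N^{k'}:H'| = |N^B:B^*| = |N^B:H^{**}| \cdot |N^A:A^*|$, where $|N^B:H^{**}| \ge 5^{|B|}$ because $|N_i:\pi_i(H^{**})| \ge 5$ for every $i \in B$, and $|N^A:A^*| \ge 5^{|A|}$ by applying the inductive hypothesis to the core-free subgroup $A^*$ of $L_{|A|}$ (valid since $|A| < k'$).

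The hard part is the subdirect case: the weighted distribution $|\Sigma_k| - 1 = \sum_i(|\Sigma_k \cap O_i| - 1/o_k)$ is crucial because a single large straddling strip can be jointly responsible for the core-freeness of several orbits, so a naive per-orbit accounting would not produce the required uniform contribution of $1/2$. The non-subdirect reductions themselves are routine Goursat/projection factorizations, though they do require careful tracking of the auxiliary subgroups $A^*, B^*, H^{**}, H^{***}, H^*_C$.
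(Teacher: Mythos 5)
Your proof is correct, but it takes a genuinely different route from the paper's in both halves of the argument, and the difference is not merely cosmetic. In the subdirect case the paper does not perform a global count: it observes that, since no $N_i$ lies in $H'$, some full strip must straddle two distinct minimal normal subgroups $N_{i_1},N_{i_2}$, projects onto $N_{i_1}\times N_{i_2}$ to harvest a factor $\ge |T|\ge 60\ge 5^2$, and applies the inductive hypothesis to $H'\cap\mathrm{Ker}(\pi_{i_1,i_2})$ inside $L_{k'-2}$; your weighted distribution of $|\Sigma_k|-1$ over the orbits a strip meets is more work but yields the stronger, non-inductive bound $|T|^{k'/2}$. The substantive divergence is in the non-subdirect case. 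The paper picks a single index $i$ with $\pi_i(H')<N_i$, asserts that $H'N_i/N_i$ is core-free in $L_{k'}/N_i\cong L_{k'-1}$, and multiplies the inductive bound $5^{k'-1}$ by $|N_i:H'\cap N_i|\ge 5$. That core-freeness assertion can fail: with $N=S\times S$ and $k'=2$, the subgroup $H'=\{(s,s,t,s):s,t\in S\}$ of $N_1\times N_2$ is core-free and has $\pi_1(H')<N_1$, yet $H'N_1=N^2$, so $H'N_1/N_1$ is the whole socle of $L_{k'}/N_1$ and the inductive step as written does not apply (the lemma itself still holds here, since $|N^2:H'|=|S|^2$). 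Your Goursat-type factorization over the partition $\{1,\dots,k'\}=A_0\cup A_1\cup B$ --- separating the coordinates where the projection is proper, where it is full but $N_j\not\le\pi_A(H')$, and where $N_j\le\pi_A(H')$ --- is precisely what is needed to handle such configurations, at the cost of the bookkeeping you acknowledge. So your argument is not only valid but is more careful than the paper's own proof at the one point where care is genuinely required.
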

\begin{proof}
We argue by induction on $k'$. If $k'=1$, then the result is clear because $N^{k'}=N$ has no proper subgroups having index less then $5$. Suppose that $k'\geq 2$ and write $N:=N_1\times \cdots\times N_{k'}$, where $N_1,\ldots,N_{k'}$ are the minimal normal subgroups of $L_{k'}$ contained in $N^{k'}$. For each $i\in \{1,\ldots,k'\}$, we denote by $\pi_i:N^{k'}\to N_i$ the natural projection onto $N_i$. 

Suppose that there exists $i\in \{1,\ldots,k'\}$ with $\pi_i(H')<N_i$. Then, $N_iH'/N_i$ is a core-free subgroup of $L_{k'}/N_i\cong L_{k'-1}$ and is contained in $N^{k'}/N_i$. Therefore, by induction, $|N^{k'}:H'N_i|=|N^{k'}/N_i:H'N_i/N_i|\ge 5^{k'-1}$. Furthermore, $|H'N_i:H'|=|N_i:H'\cap N_i|\ge 5$ because $N_i$ has no proper subgroups having index less then $5$. Therefore, $|N^{k'}:H'|\ge 5^{k'}$.

Suppose that, for every $i\in \{1,\ldots,k'\}$, $\pi_i(H')=N_i$. Since $N$ is non-abelian, we may write $N_i=S_{i,1}\times \cdots \times S_{i,\ell}$, for some pair-wise isomorphic non-abelian simple groups $S_{i,j}$ of cardinality $s$. For each $i\in \{1,\ldots,k'\}$ and $j\in \{1,\ldots,\ell\}$, we denote by $\pi_{i,j}:N^{k'}\to S_{i,j}$ the natural projection onto $S_{i,j}$. As $\pi_i(H')=N_i$, we deduce $\pi_{i,j}(H')=S_{i,j}$, for every $i\in \{1,\ldots,k'\}$ and $j\in \{1,\ldots,\ell\}$.  In particular, $H'$ is a subdirect subgroup of $S_{1,1}\times \cdots \times S_{k',\ell}$ and hence (by Scott's lemma) $H'$ is a direct product of pair-wise disjoint full strips. Since no  $N_i$ is contained in $H'$, there exist two distinct indices $i_1,i_2\in\{1,\ldots,k'\}$ and $j_1,j_2\in \{1,\ldots,\ell\}$ such that $(i_1,j_1)$ and $(i_2,j_2)$ are involved in the same full strip of $H'$. If we now consider the projection $\pi_{i_1,i_2}:N^{k'}\to N_{i_1}\times N_{i_2}$, we obtain $|N_{i_1}\times N_{i_2}:\pi_{i_1,i_2}(H')|\ge s\ge 60\ge 5^2$. The inductive hypothesis applied to $\mathrm{Ker}(\pi_{i_1,i_2})\cap H'$ yields $|\mathrm{Ker}(\pi_{i_1,i_2}):\mathrm{Ker}(\pi_{i_1,i_2})\cap H'|\ge 5^{k'-2}$ and hence $|N^{k'}:H'|\ge 5^{k'}$.
\end{proof}
In the proof of Theorem~\ref{maing} and~\ref{mains}, we use without mention the following basic fact.
\begin{lemma}\label{lemma:remark}
Let $M$ be a normal subgroup of a crown-based power $L_k$ with socle $N^k$. Then either $M\le N^k$ or $N^k\le M$.
\end{lemma}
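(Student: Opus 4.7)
The plan is to prove the dichotomy by showing that if $M\not\le N^k$, then $M$ must contain each of the $k$ coordinate copies of the socle, $N_i:=1\times\cdots\times N\times\cdots\times 1$ (with $N$ in position $i$). First I would verify that each $N_i$ is a minimal normal subgroup of $L_k$: the projection $\pi_i\colon L_k\to L$ is surjective, so $L_k$-invariant subgroups of $N_i$ correspond under $\pi_i$ to $L$-invariant subgroups of $N$, and $N=\soc(L)$ is minimal normal in the monolithic primitive group $L$.

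Since $M\cap N_i$ is a normal subgroup of $L_k$ contained in the minimal normal subgroup $N_i$, we have $M\cap N_i\in\{1,N_i\}$. The main step is to rule out the first alternative under the hypothesis $M\not\le N^k$: if $M\cap N_i=1$, then $[M,N_i]\le M\cap N_i=1$, so $M\le \cent{L_k}{N_i}$. An element $(l_1,\ldots,l_k)\in L_k$ centralizes $N_i$ if and only if $l_i\in \cent{L}{N}$, and in a monolithic primitive group one has $\cent{L}{N}\le N$ (either $N$ is nonabelian and $\cent{L}{N}=1$, or $N$ is abelian and $\cent{L}{N}=N$, via a core-free maximal subgroup of $L$ complementing $N$). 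Hence $l_i\in N$, and the defining congruence $l_1\equiv\cdots\equiv l_k\pmod{N}$ of $L_k$ then forces $l_j\in N$ for every $j$, giving $\cent{L_k}{N_i}\le N^k$ and therefore $M\le N^k$, contradicting our assumption.

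Thus $N_i\le M$ for every $i$, and consequently $N^k=N_1\cdots N_k\le M$, as required. I expect the centralizer computation to be the point needing the most care: it is tempting to try to argue directly with $M\cap N^k$ as an $L_k$-submodule of $N^k$, but when $N$ is abelian the module $N^k$ contains many diagonal $L_k$-minimal submodules besides the obvious factors $N_i$, so the cleaner route is to work with each $N_i$ individually and exploit the coset condition defining $L_k$ together with the fact that $\cent{L}{N}\le N$.
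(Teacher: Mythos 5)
Your proof is correct and follows essentially the same route as the paper: both arguments reduce to the facts that each coordinate factor $N_i$ is a minimal normal subgroup of $L_k$ and that $\cent{L_k}{N_i}\le N^k$, so that $M\not\le N^k$ forces $1\ne[M,N_i]\le M\cap N_i$ and hence $N_i\le M$ for every $i$. The only difference is presentational: the paper leaves the centralizer computation implicit, whereas you spell it out via $\cent{L}{N}\le N$ and the congruence defining $L_k$.
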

\begin{proof}
For each $i\in \{1,\ldots,k\}$, we write $N_i:=\{(n_1,\ldots,n_k)\in N^k\mid n_j=1,\, \forall j\in \{1,\ldots,k\}\setminus \{i\}\}$. In particular, $N=N_1\times \cdots \times N_k$.

Let $M$ be a normal subgroup of the crown based power $L_k$ with socle $N^k$ and with $M\nleq N^k$.  Let $m\in M\setminus N^k$. For each $i\in \{1,\ldots,k\}$, since $M$ does not centralize $N_i$, we deduce $1\ne [M,N_i]\le M\cap N_i$. As $N_i$ is one of the minimal normal subgroups of $L_k$, we must have $N_i\le M$. Therefore, $N^k=N_1\times \cdots \times N_k\le M$.  
\end{proof}

\subsection{Basic facts on $G$-groups}Given a group $G$, a $G$-\emph{group} $A$ is a group $A$ together with a group homomorphism $\theta:G\to\mathrm{Aut}(A)$. (For simplicity, we write $a^g$ for the image of $a\in A$ under the automorphism $\theta(g)$.) Given a $G$-group $A$, we have the corresponding \emph{semi-direct product} $A\rtimes_\theta G$ (or simply $A\rtimes G$ when $\theta$ is clear from the context), where the multiplication is given by $$g_1a_1\cdot g_2a_2=g_1g_2 a_1^{g_2}a_2,$$ for every $a_1,a_2\in A$ and for every $g_1,g_2\in G$.
A $G$-group $A$ is said to be \emph{irreducible} if $G$ leaves invariant no non-identity proper normal subgroup of $A$.

Two $G$-groups $A$ and $B$ are said to be $G$-\emph{isomorphic} (and we write $A\cong_G B$), if there exists an isomorphism $\varphi:A\to B$ such that  
$$(a^{g})^{\varphi}=(a^\varphi)^g,$$
for every $a\in A$ and for every $g\in G$. Similarly, we say that $A$ and $B$ are $G$-\emph{equivalent} (and we write $A \sim_G B$), if there exist two isomorphisms $\varphi:A\to B$ and $\Phi: A\rtimes G \rightarrow B\rtimes G$ such that the following diagram commutes. 

\smallskip

\begin{tikzpicture}
\begin{tikzcd}
1\arrow[r,hook]&A\arrow[r,hook]\arrow[d,"\varphi"]&A\rtimes G\arrow[r,two heads]\arrow[d,"\Phi"]&G\arrow[r,two heads]\arrow[d,equal]&1\\
1\arrow[r,hook]&B\arrow[r,hook]&B\rtimes G\arrow[r,two heads]&G\arrow[r,two heads]&1
\end{tikzcd}
\end{tikzpicture}

Being ``$G$-equivalent" is an equivalence relation among $G$-groups coarser than the ``$G$-isomorphic" equivalence relation, that is,  two $G$-isomorphic $G$-groups are necessarily $G$-equivalent. The converse is not necessarily true: for instance, if $A$ and $B$ are two isomorphic non-abelian simple groups and $G:=A	\times B$ acts on $A$ and on $B$ by conjugation, then $A\ncong_GB$ and $A\sim_G B$. However, when $A$ and $B$ are abelian, the converse is true, that is, if $A$ and $B$ are abelian, then $A\sim_GB$ if and only if $A\cong_GB$, see~\cite[page~$178$]{paz}.

Let $G$ be a group and let $A:=X/Y$ be a chief factor of $G$, where $X$ and $Y$ are normal subgroups of $G$. Clearly, the action by conjugation of $G$ endows $A$ of the structure of $G$-group and, in fact, $A$ is an irreducible $G$-group. On the set of chief factors,  the $G$-equivalence relation is easily described. Indeed,  it is proved in~\cite[Proposition~$1.4$]{paz} that two  chief factors $A$ and $B$ of $G$ are  $G$-equivalent if and only if  either 
\begin{itemize}
\item $A$ and $B$ are  $G$-isomorphic, or
\item there exists a maximal subgroup $M$ of $G$ such that $G/\mathrm{core}_G(M)$ has two minimal normal subgroups $N_1$ and $N_2$
$G$-isomorphic to $A$ and $B$ respectively. 
\end{itemize}
(The example in the previous paragraph witnesses that the second possibility does arise.) From this, it follows that, for every monolithic primitive group $L$ and for every $k\in\mathbb{N}$, the  minimal normal subgroups of the crown-based power $L_k$ are all $L_k$-equivalent.

\subsection{Crowns of a finite group}\label{sec:crowns}
Let $X$ and $Y$ be normal subgroups of $G$ with $A=X/Y$ a chief factor of $G$. A \emph{complement} $U$ to $A$ in $G$ is a subgroup $U $ of $G$ such that $$G=UX\, \textrm{ and }\,Y=U \cap X.$$ We say that   $A=X/Y$ is a \emph{Frattini chief factor} if  $X/Y$ is contained in the Frattini subgroup of $G/Y$; this is equivalent to saying that $A$ is abelian and there is no complement to $A$ in $G$.
The  number $\delta_G(A)$  of non-Frattini chief factors $G$-equivalent to $A$   in any chief series of $G$  does not depend on the series and hence $\delta_G(A)$ is a well-defined integer depending only on the chief factor $A$. 

We denote by  $L_A$  the  \emph{monolithic primitive group  associated} to $A$,
that is,
$$L_{A}:=
\begin{cases}
A\rtimes (G/C_G(A)) & \text{ if $A$ is abelian}, \\
G/C_G(A)& \text{ otherwise}.
\end{cases}
$$

If $A$ is a non-Frattini chief factor of $G$, then $L_A$ is a homomorphic image of $G$.
More precisely,  there exists
a normal subgroup $N$ of $G$ such that $$G/N \cong L_A\,\textrm{ and }\,\soc(G/N)\sim_GA.$$ Consider now the collection $\mathcal{N}_A$ of all  normal subgroups $N$ of $G$ with $G/N \cong L_A$ and $\soc(G/N)\sim_G A$:
the intersection $$R_G(A):=\bigcap_{N\in\mathcal{N}_A}N$$ has the property that  $G/R_G(A)$ is isomorphic to the crown-based  power $(L_A)_{\delta_G(A)}$, that is, $G/R_G(A)\cong (L_A)_{\delta_G(A)}$.

The socle $I_G(A)/R_G(A)$ of $G/R_G(A)$ is called the $A$-\emph{crown} of $G$ and it is  a direct product of $\delta_G(A)$ minimal normal subgroups all $G$-equivalent to $A$.

We conclude this preliminary section with two technical lemmas and one of the main results from~\cite{LPS}.
\begin{lemma}{\cite[Lemma 1.3.6]{classes}}\label{sotto}
	Let $G$ be a finite  group with trivial Frattini subgroup. There exists
	a chief factor  $A$ of $G$ and a non-identity normal subgroup $D$ of $G$ with $I_G(A)=R_G(A)\times D.$
\end{lemma}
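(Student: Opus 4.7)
The plan is to use the hypothesis $\Phi(G)=1$ to locate a chief factor $A$ of $G$ for which the $A$-crown $I_G(A)/R_G(A)$ lifts, via a normal subgroup of $G$, to a direct factor of $I_G(A)$. I may assume $G\ne 1$ (the case $G=1$ being vacuous), so that $G$ has a minimal normal subgroup.

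First, I would pick a minimal normal subgroup $D$ of $G$. Since $D\not\le \Phi(G)=1$, there is a maximal subgroup $M$ of $G$ with $G=DM$. Setting $N:=\mathrm{core}_G(M)$, we have $D\cap N\trianglelefteq G$ contained in $D$, so by minimality $D\cap N=1$; in particular, $DN/N$ is a minimal normal subgroup of the primitive group $G/N$ and is $G$-isomorphic to $D$. Take $A$ to be the $G$-equivalence class of $D$. If $G/N$ is monolithic, then $G/N\cong L_A$ with $\soc(G/N)\sim_G A$, so $N\in\mathcal{N}_A$ and thus $R_G(A)\le N$, giving $D\cap R_G(A)\le D\cap N=1$. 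If instead $G/N$ has two non-abelian minimal normal subgroups $DN/N$ and $K/N$, I would quotient further (for instance by $K$, using a second maximal subgroup avoiding $D$) to land in the monolithic situation while preserving $D\cap R_G(A)=1$.

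Second, I would enlarge $D$ to capture the full $A$-crown. Let $D^\ast$ be the product of all minimal normal subgroups of $G$ that are $G$-equivalent to $A$; these minimal normals are pairwise disjoint, so $D^\ast$ is their internal direct product and is normal in $G$. Applying the argument from the previous paragraph to each summand (using $\Phi(G)=1$ each time to produce an appropriate maximal subgroup) gives $D_i\cap R_G(A)=1$ for every minimal normal $D_i\sim_G A$, and hence $D^\ast\cap R_G(A)=1$. Consequently $D^\ast R_G(A)/R_G(A)$ is a direct product of $G$-equivalent minimal normal subgroups of $G/R_G(A)$ contained in the $A$-crown $I_G(A)/R_G(A)$.

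Third, to obtain the reverse inclusion $D^\ast R_G(A)=I_G(A)$, I would choose $A$ so that every chief factor of $G$ lying in the $G$-equivalence class $[A]$ is in fact a minimal normal subgroup of $G$; equivalently, $\delta_G(A)$ coincides with the number of minimal normal subgroups of $G$ in $[A]$. Such a chief factor exists: using that $\Phi(G)=1$ and a Jordan--Hölder argument on chief factors, some $G$-equivalence class has no representative above $\soc(G)$, because any such representative, being a non-Frattini chief factor of $G/\soc(G)$, would have to correspond to a non-trivial $G$-equivariant extension that is ruled out by the splitting of $\soc(G)$ off the Frattini-free group $G$. With this choice, the image of $D^\ast$ exhausts the socle of $G/R_G(A)$, yielding $I_G(A)=R_G(A)\cdot D^\ast$. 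Combined with $D^\ast\cap R_G(A)=1$, this gives the internal direct product $I_G(A)=R_G(A)\times D^\ast$, with $D^\ast\ne 1$ since $D\le D^\ast$.

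The main obstacle is the third step: selecting $A$ so that its $G$-equivalence class is \emph{saturated} by minimal normal subgroups of $G$. The hypothesis $\Phi(G)=1$ is essential precisely here; without it, the $A$-crown of $G/R_G(A)$ could have contributions from Frattini chief factors above $\soc(G)$ that do not lift to normal subgroups of $G$, as one sees for instance in $A_4$ with the chief factor $A_4/V_4\cong C_3$, where no complement to $V_4$ in $A_4$ is normal.
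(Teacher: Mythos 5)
The paper does not actually prove this lemma; it is imported verbatim from \cite[Lemma 1.3.6]{classes}, so your proposal has to stand on its own. Your first two steps are essentially sound: choosing a minimal normal subgroup $D$, producing $N=\mathrm{core}_G(M)$ with $D\cap N=1$ and $N\in\mathcal{N}_A$ (after passing to the monolithic quotient in the case where $G/N$ has two minimal normal subgroups), and hence $D\cap R_G(A)=1$, is correct. One caveat: the inference ``$D_i\cap R_G(A)=1$ for all $i$, hence $D^\ast\cap R_G(A)=1$'' is not valid for products of subgroups in general; you need the additional observation that every minimal normal subgroup of $G$ contained in $D^\ast$ is again $G$-equivalent to $A$ (immediate in the non-abelian case, a semisimplicity argument in the abelian case), so that the normal subgroup $D^\ast\cap R_G(A)$, were it non-trivial, would contain a minimal normal subgroup to which your first step applies. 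That is fixable. You have also correctly identified that the lemma reduces to finding one class $[A]$ with $\delta_G(A)$ equal to the number $m_{[A]}$ of minimal normal subgroups of $G$ lying in $[A]$.

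The genuine gap is your third step, which is exactly the crux. Your justification --- that a non-Frattini chief factor of $G/\soc(G)$ lying in the class of a minimal normal subgroup ``would have to correspond to a non-trivial $G$-equivariant extension that is ruled out by the splitting of $\soc(G)$'' --- is not an argument and, read as a general principle, is false. Take $G=C_2\times S_3$: here $\Phi(G)=1$ and $\soc(G)\cong C_6$ is even complemented, yet the class of the central minimal normal subgroup $C_2\times 1$ (the trivial $G$-module of order $2$) also contains the non-Frattini top factor $G/\soc(G)\cong C_2$; one checks that $R_G(C_2\times 1)=1\times A_3$, $I_G(C_2\times 1)=G$ and $\delta_G(C_2\times 1)=2>1=m_{[C_2\times 1]}$, and $G$ has no normal subgroup complementing $1\times A_3$, so this class does \emph{not} satisfy the conclusion. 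The lemma holds for this $G$ only because the other class, that of $1\times A_3$, works. Hence the existence of a good class is a genuine selection problem: your reasoning, if it were valid, would show that \emph{every} class containing a minimal normal subgroup works, which the example refutes. You need an actual mechanism --- an induction on $|G|$, or a minimality choice among the subgroups $I_G(A)$ or $R_G(A)$ combined with a Gasch\"utz-type supplement argument, as in the cited source --- to isolate a class with no non-Frattini representative above the socle; as written, the decisive step is asserted rather than proved.
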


\begin{lemma}{\cite[Proposition 11]{crowns}}\label{sotto2}
Let $G$ be a finite  group with trivial Frattini subgroup, let $I_G(A), R_G(A)$ and $D$ be as in the statement of Lemma~$\ref{sotto}$ and let $K$ be a subgroup of $G$. If $G=KD=KR_G(A),$ then $G=K.$
\end{lemma}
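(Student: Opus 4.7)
I argue by contradiction and assume $K\lneq G$. The strategy is to perform a Goursat-style analysis of $K \cap I_G(A)$ inside the direct product $I_G(A) = R_G(A) \times D$, and then to derive a contradiction using both the trivial Frattini hypothesis and the crown-theoretic definition of $R_G(A)$.

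Set $H := K \cap I_G(A)$. Since $R_G(A)$ and $D$ are both contained in $I_G(A)$, the modular law applied to $G=KR_G(A)$ and $G=KD$ gives $I_G(A) = H\, R_G(A) = H\, D$. Thus, inside the direct product $I_G(A) = R_G(A) \times D$, the subgroup $H$ projects onto both direct factors; a short index calculation using $R_G(A)\cap D = 1$ yields $|I_G(A):H| = |G:K|$, so the assumption $K\lneq G$ forces $H\lneq I_G(A)$, and $H$ is a \emph{proper} subdirect subgroup of $R_G(A) \times D$. Applying Goursat's lemma to $H$, one obtains that $N_R := H\cap R_G(A)$ is normal in $R_G(A)$, $N_D := H\cap D$ is normal in $D$, and there is a canonical non-trivial isomorphism $\phi\colon R_G(A)/N_R \to D/N_D$, equivariant with respect to the conjugation actions descending from $H$.

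To extract a contradiction I would enlarge $K$ to a maximal subgroup $M$ of $G$ containing $K$ (which still satisfies $G=MR_G(A) = MD$), and then pass to the primitive quotient $G/\mathrm{core}_G(M)$. There, the Goursat isomorphism can be promoted to a $G$-equivariant isomorphism between a non-trivial $G$-chief factor of $R_G(A)$ and a non-trivial $G$-chief factor of $D$. The chief factor on the $D$-side is automatically $G$-equivalent to $A$: $D$ is $G$-isomorphic to the socle $I_G(A)/R_G(A)$ of the crown-based power $(L_A)_{\delta_G(A)}$, and every chief factor of that socle is $G$-equivalent to $A$. Hence the chief factor inside $R_G(A)$ produced by $\phi$ is also $G$-equivalent to $A$, and it is non-Frattini because it admits a complement coming from $M/\mathrm{core}_G(M)$. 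But any non-Frattini $G$-chief factor inside $R_G(A)$ that is $G$-equivalent to $A$ produces, via its complement, a normal subgroup of $G$ belonging to $\mathcal{N}_A$ and not containing $R_G(A)$, contradicting $R_G(A) = \bigcap \mathcal{N}_A$.

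The main obstacle in this plan is the promotion of Goursat's conjugation-equivariant isomorphism (which a priori respects only the actions of $R_G(A)$ and $D$ on themselves) to a genuine $G$-equivariant isomorphism of chief factors after taking cores. The hypothesis $\Phi(G)=1$ enters at this juncture, both implicitly via Lemma~\ref{sotto} (which supplies the decomposition $I_G(A) = R_G(A) \times D$ with $D \neq 1$) and explicitly in guaranteeing enough maximal subgroups of $G$ to ensure that the chief factor ultimately produced on the $R_G(A)$-side is non-Frattini rather than being absorbed into the Frattini subgroup of some quotient.
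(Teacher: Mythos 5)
First, note that the paper does not actually prove this lemma: it is quoted verbatim from \cite[Proposition~11]{crowns}, so there is no internal proof to match your argument against. Judged on its own terms, your proposal is not a complete proof — it is a strategy with a gap at exactly the decisive point, a gap you yourself flag as ``the main obstacle'' without closing it. Concretely: writing $R=R_G(A)$, $I=I_G(A)$, $H=K\cap I$, $N_R=K\cap R$, $N_D=K\cap D$, the Goursat isomorphism $\phi\colon R/N_R\to D/N_D$ is equivariant only for the conjugation action of $K$ (its graph is $H/(N_R\times N_D)$, and $H$ is normalized by $K$ but not by $I$). Since $G=KR=KD$ and $D$ centralizes $R$, one checks that $\phi$ is genuinely $G$-equivariant if and only if $[R,R]\le N_R$, i.e.\ if and only if $R/N_R$ is abelian; so in the non-abelian case the ``promotion'' cannot produce a $G$-isomorphism, and what you would really need is a $G$-\emph{equivalence} in the sense of the commuting-diagram definition. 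Establishing that, passing from the section $R/N_R$ (which is in general not a chief factor) to an honest chief factor of $G$ below $R$, verifying that it is non-Frattini, and transporting the Goursat data from $K$ to the maximal overgroup $M$ (where $M\cap I$ is a different subgroup with different Goursat invariants) are all nontrivial steps that the proposal asserts rather than proves. The final contradiction you aim for — a non-Frattini chief factor below $R_G(A)$ that is $G$-equivalent to $A$ would violate $R_G(A)=\bigcap_{N\in\mathcal{N}_A}N$ — is correct, but everything feeding into it is missing.

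For comparison, the argument in \cite{crowns} is shorter and avoids Goursat entirely. Since $I=R\times D$ and $I/R=\soc(G/R)$ is a direct product of minimal normal subgroups of $G/R$ all $G$-equivalent to $A$, Dedekind's law shows that $D$ itself is a direct product of minimal normal subgroups $D_1,\ldots,D_\delta$ of $G$, each $G$-equivalent to $A$ and each non-Frattini because the Frattini subgroup of $G$ is trivial. If $K<G$, pick a maximal subgroup $M\ge K$; then $G=MD=MR$, so $D_j\not\le M$ for some $j$ and $G=MD_j$, i.e.\ $M$ supplements a minimal normal subgroup $G$-equivalent to $A$. A standard property of crowns (analysing the primitive quotient $G/\mathrm{core}_G(M)$, in both the monolithic case and the case of two minimal normal subgroups) then shows that any such maximal subgroup must contain $R_G(A)$, contradicting $G=MR$. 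If you want to salvage your approach you would essentially have to reprove this last fact, at which point the Goursat machinery becomes superfluous.
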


\begin{thm}\label{pyber}{\cite[Theorem~$1.4$]{LPS}}There exists a constant $c$ such that every finite group has at most $cn^{3/2}$ core-free maximal subgroups of index $n$.
\end{thm}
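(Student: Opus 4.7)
The plan is to prove Theorem~\ref{pyber} by combining the O'Nan--Scott classification of finite primitive permutation groups with CFSG-based enumerations of maximal subgroups of almost simple groups. The opening reduction is as follows: if $M$ is a core-free maximal subgroup of $G$ of index $n\ge 2$, then $N_G(M)\in\{M,G\}$, and $N_G(M)=G$ forces $M$ normal and core-free, hence $M=1$ and $G$ of prime order $n$ (a trivial case). Otherwise $N_G(M)=M$, so the $G$-conjugacy class of $M$ has exactly $n$ elements. It therefore suffices to bound the number of $G$-conjugacy classes of core-free maximal subgroups of index $n$ by an absolute multiple of $n^{1/2}$.

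Each such conjugacy class corresponds to a faithful primitive action of $G$ on $n$ points, whose O'Nan--Scott type is dictated by $\soc(G)$. In the affine case $\soc(G)=V\cong\mathbb{F}_p^d$ with $n=p^d$, any core-free maximal $M$ of index $n$ satisfies $M\cap V=1$ (since $V$ is minimal normal and $\mathrm{core}_G(M)=1$) and $MV=G$, hence is a complement to $V$. The complement conjugacy classes are parametrized by $H^1(G/V,V)$, and polynomial bounds on the order of this cohomology group (Guralnick-type estimates for irreducible modules), combined with the enumeration of the possible faithful irreducible $G/V$-module structures on $V$, yield $O(n^{1/2})$ conjugacy classes of core-free maximal subgroups of index $n$ in this case.

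In the non-abelian socle case $\soc(G)=T^k$, with $T$ a non-abelian simple group, each conjugacy class of core-free maximal $M$ falls into one of the standard O'Nan--Scott subtypes (almost simple, simple diagonal, product action, compound diagonal, or twisted wreath), and in each subtype $M\cap T^k$ has a severely restricted structure. The count reduces to enumerating $\aut(T)$-conjugacy classes of maximal subgroups of an almost simple group with socle $T$ and specified index; by CFSG this is polynomial in $|T|$ (Aschbacher's classes for classical groups, Liebeck--Seitz for exceptional Lie-type groups, explicit data for sporadic groups, and the Liebeck--Praeger--Saxl analysis for alternating groups). Together with $|T|^k\le n$ and the polynomial bound $|\mathrm{Out}(T)|\leq |T|^{O(1)}$, this produces a polynomial-in-$n$ bound, sharpened to exponent $1/2$ by careful case bookkeeping.

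The main obstacle is the almost simple case with $T\cong\alt(m)$. The maximal subgroups of $T$ partition into intransitive ($S_j\times S_{m-j}$), imprimitive ($S_a\wr S_b$ with $ab=m$), and primitive families, producing many $\aut(T)$-conjugacy classes of the same index. Showing that the total number of such classes of index $n$ is $O(n^{1/2})$ is exactly what drives the final exponent $3/2$ in Theorem~\ref{pyber}, and attaining an \emph{absolute} constant $c$ (independent of $G$) requires uniform CFSG-based asymptotics across all socle choices; in the non-almost-simple O'Nan--Scott subtypes the wreath and diagonal constructions must also be unpacked carefully to avoid double-counting across types.
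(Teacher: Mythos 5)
The first thing to note is that the paper does not prove this statement at all: Theorem~\ref{pyber} is imported verbatim from Liebeck, Pyber and Shalev~\cite[Theorem~$1.4$]{LPS} (together with the remark following its statement there) and is used as a black box in the proofs of Theorems~\ref{maing} and~\ref{mains}. So there is no internal proof to compare yours against; the relevant comparison is with the argument in~\cite{LPS}, whose overall strategy your outline does reproduce correctly at the top level: a core-free maximal subgroup of index $n$ is self-normalizing (outside the prime-order case), so its conjugacy class has exactly $n$ elements and it suffices to bound the number of classes by $O(n^{1/2})$, which is then attacked via the O'Nan--Scott theorem.

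As a proof, however, your proposal has genuine gaps exactly where the theorem is hard. In the affine case the estimate you need is $|H^1(G/V,V)|\le |V|^{1/2}$ for a faithful irreducible module $V$ (Guralnick--Hoffman); the classical bound $|H^1(G/V,V)|<|V|$ only yields exponent $2$, and the ``enumeration of possible module structures'' you invoke plays no role, since $V=\soc(G)$ is a fixed subgroup of the fixed group $G$ and the classes of complements are counted by the single group $H^1(G/V,V)$. In the non-abelian socle case, the inequality $|T|^k\le n$ is false in general (for simple diagonal type $n=|T|^{k-1}$), and, more importantly, a bound on the number of $\aut(T)$-classes of maximal subgroups that is merely ``polynomial in $|T|$'' cannot deliver exponent $1/2$ in $n$: what is needed is that the number of conjugacy classes of maximal subgroups of index $n$ in an almost simple group is $O(n^{1/2})$, and this --- including the intransitive/imprimitive/primitive bookkeeping for alternating socles that you explicitly defer --- is precisely the hardest content of~\cite{LPS} and of the Liebeck--Shalev work it builds on. A polynomial-in-$n$ bound with unspecified exponent was already available from Mann and Shalev~\cite{Pyber}; the whole point of Theorem~\ref{pyber} is the exponent $3/2$, and that is the part your outline names as an obstacle rather than resolves. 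In the context of this paper, the correct ``proof'' is simply the citation.
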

Theorem~\ref{pyber} is an improvement of~\cite[Corollary~2]{Pyber}. We warn the reader that the statement of Theorem~\ref{pyber} is slightly different from that of Theorem~$1.4$ in~\cite{LPS}: to get Theorem~\ref{pyber} one should take into account Theorem~$1.4$ in~\cite{LPS} and the remark following its statement.

\section{Proof of Theorems~$\ref{maing}$ and~$\ref{mains}$}
In this section we prove Theorems~$\ref{maing}$ and~\ref{mains}. Our proofs are inspired from some ideas developed in~\cite{ik}. Moreover, our proofs have some similarities and hence we start by deducing some general facts holding for both. 

We start by defining the universal constant $a$. Observe that the series $\sum_{u=1}^\infty u^{-3/2}$ converges. We write  
$$a':=\sum_{u=1}^\infty \frac{1}{u^{3/2}}.$$
Let $c$ be the universal constant arising from  Theorem~\ref{pyber}. We define
$$a:=\frac{11ca'}{1-1/2^{3/2}}.$$

Recall that $\max(H,G)$ is the number of  maximal subgroups of $G$ containing $H$. For the proof of Theorems~$\ref{maing}$ and~\ref{mains} we argue by induction on $|G:H|+|G|$. The case $|G:H|=1$ for the proof of Theorem~\ref{maing} is clear because $\max(H,G)=0$. Similarly,  the case that $H$ is maximal in $G$ for the proof of Theorem~\ref{mains} is clear because $\max(H,G)=1$. In particular, for the proof of Theorem~\ref{maing}, we suppose $|G:H|>1$  and, for the proof of Theorem~\ref{mains}, we suppose that $H$ is not maximal in $G$. 

Consider $$\tilde{H}:=\bigcap_{\substack{H\le M<G\\M \textrm{ max. in }G}}M.$$ 
Observe that $\max(H,G)=\max(\tilde{H},G)$. In particular, when $H<\tilde{H}$, we have $|G:\tilde{H}|<|G:H|$ and hence, by induction, we have $\max(H,G)=\max(\tilde{H},G)\le a|G:\tilde{H}|^{3/2}<a|G:H|^{3/2}$; moreover, when $G$ is soluble, we have $\max(H,G)=\max(\tilde{H},G)\le |G:\tilde{H}|-1<|G:H|-1$. Therefore, we may suppose  $H=\tilde{H}$, that is, 
\begin{equation}\label{eq:2}
H\textrm{ is an intersection of maximal subgroups of }G.
\end{equation}

Suppose that $H$ contains a non-identity normal subgroup $N$ of $G$. Since $\max(H,G)=\max(H/N,G/N)$ and $|G/N|<|G|$, by induction, we have $\max(H,G)=\max(H/N,G/N)\le a|G/N:H/N|^{3/2}=a|G:H|^{3/2}$; moreover, when $G$ is soluble, we have $\max(H,G)=\max(H/N,G/N)\le |G/N:G/N|-1=|G:H|-1$. Therefore, we may suppose
\begin{equation}\label{eq:3}
\mathrm{core}_G(H)=1.
\end{equation}

Let $F$ be the Frattini subgroup of $G$. From~\eqref{eq:2}, we have $F\le H$ and hence, from~\eqref{eq:3}, $F=1$. 
In particular, we may now apply Lemma~\ref{sotto} to the group $G$.

Choose $I$, $R$ and $D$ as in Lemma \ref{sotto}. 
From~\eqref{eq:2}, we may write
$$H=X_1\cap\cdots\cap X_{\rho}\cap Y_1\cap\cdots\cap Y_{\sigma},$$
where  $X_1,\dots,X_\rho$ are the maximal subgroups of $G$ not containing $D$ 
and $Y_1,\dots,Y_\sigma$ are the  maximal subgroups of $G$ containing $D.$
	We define 
$$X:=X_1\cap\cdots\cap X_{\rho}\, \text{ and }\, Y:=Y_1\cap\cdots\cap Y_{\sigma}.$$
Thus $H=X\cap Y$.

For every $i\in \{1,\ldots,\rho\}$, since $D\nleq X_i$, we have $G=DX_i$ and hence Lemma~\ref{sotto2} (applied with $K:=X_i$) yields $R\le X_i$. In particular,
\begin{equation}\label{eq:5}
R\le X.
\end{equation}

Since $R=R_G(A)$ for some chief factor $A$ of $G$, Section~\ref{sec:crowns} yields $$G/R\cong L_k,$$ 
for some monolithic primitive group $L$ and for some positive integer $k$. We let $N$ denote the minimal normal subgroup (a.k.a. the socle) of $L$. From the definition of $I$ and $R$, we have $I/R=\soc(G/R)\cong \soc(L_k)=N^k$.   Finally, let $T:=X\cap I$. In particular, $$\frac{T}{R}=\frac{X}{R}\cap \frac{I}{R}.$$

We have
$$H\cap D=(X\cap Y)\cap D=X\cap (Y\cap D)=X\cap D=X\cap (I\cap D)=(X\cap I)\cap D =T\cap D.$$
It follows
$$|G:HD|=\frac{|G:H|}{|HD:H|}=\frac{|G:H|}{|D:H\cap D|}=\frac{|G:H|}{|D:T\cap D|}.$$
If $D\le T$, then $D\le X$ and hence $D\le X\cap Y=H $ because $D\le Y$; however this is a contradiction because $D\ne 1$ and hence, from~\eqref{eq:3}, $D\not\leq H.$ Therefore $D\nleq T$ and $|D:T\cap D|>1$.
 
Applying our inductive hypothesis, we obtain 
\begin{equation}\label{eq:sigma}\sigma=\max(HD/D,G/D)\leq a|G/D:HD/D|^{3/2}=a|G:HD|^{3/2}=a\left(\frac{|G:H|}{|D:D\cap T|}\right)^{3/2}\le \frac{a}{2^{3/2}}|G:H|^{3/2};
\end{equation}
moreover,  when $G$ is soluble and $HD$ is a proper subgroup of $G$, we obtain 
\begin{equation}\label{eq:sigmas}
\sigma=\max(HD/D,G/D)\leq |G/D:HD/D|-1=|G:HD|-1=\frac{|G:H|}{|D:D\cap T|}-1\le \frac{|G:H|}{2}-1.
\end{equation}
(Observe that, when $G$ is soluble and $G=HD$, we have $\sigma=0$ and hence the inequality $\sigma\le |G:H|/2-1$ is valid also in this degenerate case.)

From~\eqref{eq:5}, we deduce $\rho\le\max(HR,G)$. If $R\nleq H$, then $|G:HR|<|G:H|$ and hence, applying our inductive hypothesis, we obtain
\begin{equation}\label{eq:newg}\rho\le\max(HR,G)\le a|G:HR|^{3/2}= a\left(\frac{|G:H|}{|HR:H|}\right)^{3/2}\le\frac{a}{2^{3/2}}|G:H|^{3/2};\end{equation}
moreover,  when $G$ is soluble and $HR$ is a proper subgroup of $G$, we obtain 
\begin{equation}\label{eq:news}\rho\le\max(HR,G)\le |G:HR|-1= \frac{|G:H|}{|HR:H|}-1\le\frac{|G:H|}{2}-1.\end{equation}
(As above, when $G$ is soluble and $G=HR$, we have $\rho=0$ and hence the inequality $\rho\le |G:H|/2-1$ is valid also in this degenerate case.)

Now, from~\eqref{eq:sigma} and~\eqref{eq:newg}, we have $$\max(H,G)=\sigma+\rho\le \frac{2a}{2^{3/2}}\cdot|G:H|^{3/2}<a|G:H|^{3/2};$$
similarly, when $G$ is soluble, from~\eqref{eq:sigmas} and~\eqref{eq:news}, we have
$$\max(H,G)=\sigma+\rho\le \frac{|G:H|}{2}-1+\frac{|G:H|}{2}-1<|G:H|-1.$$
 In particular, for the rest of the proof, we may assume that $R\le H$. Now,~\eqref{eq:3} yields $R=1$ and hence $G\cong L_k$ and $D=I$. Therefore, we may  identify $G$ with $L_k$ and $D$ with $N^k$.

Set $$\mathcal{C}:=\{\mathrm{core}_G(X_i)\mid i\in \{1,\ldots,\rho\}\}$$
and, for every $C\in\mathcal{C}$, set 
$$\mathcal{M}_C:=\{X_i\mid i\in \{1,\ldots,\rho\}, C=\mathrm{core}_G(X_i)\}.$$
 
For the rest of our argument for proving Theorems~\ref{maing} and~\ref{mains}, we prefer to keep the proofs separate.

\begin{proof}[Proof of Theorem~$\ref{maing}$]

\noindent\textsc{Case 1: }Suppose that $N$ is non-abelian.

\smallskip 

\noindent Since $N$ is non-abelian, the group $G=L_k$ has exactly $k$ minimal normal subgroups.  We denote by $N_1,\ldots,N_k$ the minimal normal subgroups of $G$. In particular, $I=N^k=N_1\times N_2\times\cdots \times N_k$.

 We claim that, for every $i\in \{1,\ldots,\rho\}$, there exist $x,y\in \{1,\ldots,k\}$ such that $N_\ell\le X_i$, for every $\ell\in \{1,\ldots,k\}\setminus\{x,y\}$, that is, $X_i$ contains all but possibly at most two minimal normal subgroups of $G$.

We argue by induction on $k$. The statement is clearly true when $k\leq 2$. Suppose then $k\ge 3$ and let $C:=\mathrm{core}_{G}(X_i)$. If $C=1$, then $X_i$ is a maximal core-free subgroup of $G$ and hence the action of $G$ on the right cosets of $X_i$ gives rise to a faithful primitive permutation representation. Since a primitive permutation group has at most two minimal normal subgroups~\cite[Theorem~$4.4$]{CameronBook} and since $G$ has exactly $k$ minimal normal subgroups, we deduce that $k\le 2$, which is a contradiction. Therefore $C\neq 1$.

Since $N_1,\ldots,N_k$ are the minimal normal subgroups of $L_k$, we deduce that there exists $\ell\in \{1,\ldots,k\}$ with $N_\ell\le C$. Now, the proof of the claim follows applying the  inductive hypothesis to $G/N_\ell\cong L_{k-1}$ and to its maximal subgroup $X_i/N_\ell$.

The previous claim shows that, for every $C\in\mathcal{C}$, $C$ contains all but possibly at most two minimal normal subgroups of $N^k=I$. Therefore, $$|\mathcal{C}|\le k^2.$$

Let $C\in \mathcal{C}$ and let $M\in \mathcal{M}_C$. The reader might find useful to see Figure~\ref{fig:1}, where we have drawn a fragment of the subgroup lattice of $G$ relevant to our argument.

\begin{figure}[!h]
\begin{tikzpicture}[node distance   = 1cm ]
        \node(A){$G$};
\node[below=of A](B){$HI$};
\node[below=of B](C){$I$};
\node[right=of B](AA){$\,\,\,\,\,\,\,M\,\,\,\,\,\,\,$};
\node[right=of C](D){$H(I\cap M)$};
\node[below=of D](E){$I\cap M$};
\node[right=of E](F){$H$};
\node[below=of F](G){$I\cap H$};
\draw(A)--node[left]{$y$}(B);
\draw(B)--node[left]{$x$}(C);
\draw(A)--node[above]{$z$}(AA);
\draw(AA)--node[left]{$y$}(D);
\draw(B)--node[above]{$z$}(D);
\draw(E)--node[above]{$z$}(C);
\draw(E)--node[left]{$x$}(D);
\draw(D)--node[above]{$t$}(F);
\draw(E)--node[above]{$t$}(G);
\draw(F)-- node[left]{$x$}(G);
\end{tikzpicture}
\caption{Subgroup lattice for $G$}\label{fig:1}
\end{figure}
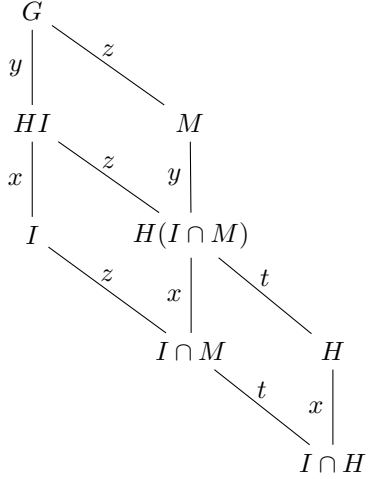
Let $k'$ be the number of minimal normal subgroups of $G$  contained in $M$. In particular, $I\cap M\cong N^{k'}$. Observe that $I\cap H$ is contained in $I\cap M$ and is core-free in $G$. Applying Lemma~\ref{lemma:new} (with $H'$ replaced by $I\cap H$ in a crowned-based group isomorphic to $L_{k'}$), we get $|I\cap M:I\cap H|\ge 5^{k'}$. As $k'\ge k-2$, we deduce $t\ge 5^{k-2}$. 

Now, $M/C$ is a core-free maximal subgroup of $G/C$. From Theorem~\ref{pyber}, when $C=\mathrm{core}_G(M)$ and $z=|G:C|$ are fixed,  we have at most $cz^{3/2}$ choices for $M$. As $t\ge 5^{k-2}$, we have $z\le |G:H|/5^{k-2}$. 
Thus
\begin{align*}
\rho&=\sum_{C\in\mathcal{C}}|\mathcal{M}_C|\le 
\sum_{C\in\mathcal{C}}\sum_{\substack{z\mid |G:H|\\z\le |G:H|/5^{k-2}}}cz^{3/2}\le ck^2\sum_{\substack{z\mid |G:H|\\z\le |G:H|/5^{k-2}}}z^{3/2}=ck^2\left(\frac{|G:H|}{5^{k-2}}\right)^{3/2}\sum_{\substack{z\mid |G:H|\\z\le |G:H|/5^{k-2}}} \left(\frac{5^{k-2}z}{|G:H|}\right)^{3/2}.
\end{align*}
 Therefore, 
\begin{align*}
\sum_{\substack{z\mid |G:H|\\z\le |G:H|/5^{k-2}}} \left(\frac{5^{k-2}z}{|G:H|}\right)^{3/2}\le \sum_{u=1}^\infty\frac{1}{u^{3/2}}=a'.
\end{align*}
Finally, it is easy to verify that, for every $k$, $k^2/5^{3(k-2)/2}\le 11$. Summing up,
\begin{equation}\label{ugly}
\rho\le 11ca'|G:H|^{3/2}.
\end{equation}
From~\eqref{eq:sigma},~\eqref{ugly} and from the definition of $a$, we have
$$\max(H,G)=\sigma+\rho\le \frac{a}{2^{3/2}}|G:H|^{3/2}+11ca'|G:H|^{3/2}=a|G:H|^{3/2}.$$

\smallskip

\noindent\textsc{Case 2: }Suppose that $N$ is abelian.

\smallskip

\noindent As $N$ is abelian, the action of $L$ by conjugation on $N$ endows $N$ of the structure of an $L$-module. Since $L$ is primitive, $N$ is irreducible. Set $q:=|\mathrm{End}_L(N)|$. Now, $N$ is a vector space over the finite field $\mathbb{F}_q$ with $q$ elements, and hence $|N|=q^{k'}$, for some positive integer $k'$.

  Let $C\in \mathcal{C}$ and let $M\in \mathcal{M}_C$. From Lemma~\ref{lemma:remark}, $C\le I$. Now, the action of $G/C$ on the right cosets of $M/C$ is a primitive permutation group with point stabilizer $M/C$. Observe that in this primitive action, $I/C$  is the socle of $G/C$. In particular, $G/C$ acts irreducibly as a linear group on $I/C$ and hence $C$ is a maximal $L$-submodule of $I$. Since $I$ is the direct sum of $k$ pairwise isomorphic irreducible $L$-modules, we deduce that we have at most $(q^k-1)/(q-1)$ choices for $C$. Moreover, $|G:M|=|G/C:M/C|=|N|=q^{k'}$. From Theorem~\ref{pyber}, when $C$ is fixed, we have at most $c|G:M|^{3/2}=c(q^{k'})^{3/2}$ choices for $M\in\mathcal{M}_C$. This yields 
\begin{equation}\label{eq:refine}\rho\le |\mathcal{C}|\cdot\max_{C\in\mathcal{C}}|\mathcal{M}_C|\le \frac{q^k-1}{q-1}\cdot cq^{3k'/2}<cq^{k+3k'/2}.
\end{equation}
As we have observed above, $M\cap I=C$ is an $L$-submodule of $G$. Since an intersection of $L$-submodules is an $L$-submodule, we deduce that 
$$H\cap I=(X_1\cap \cdots \cap X_\rho)\cap I$$
is an $L$-submodule of $I$ and hence $H\cap I\unlhd G$. Since $H$ is core-free in $I$, we deduce $H\cap I=1$ and hence $|I|=|N|^k=q^{kk'}$ divides $|G:H|$. In particular, $|G:H|\le q^{kk'}$.
Therefore, from~\eqref{eq:refine}, we obtain
$$\rho \le c|G:H|^{\frac{k+3k'/2}{kk'}}.$$
When $k\ne 1$ or when $(k,k')\ne (2,1)$, we have $\frac{k+3k'/2}{kk'}\le 3/2$. When $k=1$, by refining~\eqref{eq:refine}, we obtain the sharper bound $\rho \le cq^{3k'/2}\le c|G:H|^{3/2}$. When $(k,k')=(2,1)$, we may refine again~\eqref{eq:refine}: $\rho\le c(q+1)q^{3/2}\le c\cdot 2q\cdot q^{3/2}=2cq^{5/2}\le 2c |G:H|^{5/4}\le 2c|G:H|^{3/2}$. Summing up, in all cases we have
\begin{equation}\label{eq:final}
\rho \le 2c|G:H|^{3/2}.
\end{equation}

From~\eqref{eq:sigma} and~\eqref{eq:final}, we have
$$\max(H,G)=\sigma+\rho\le \frac{a}{2^{3/2}}|G:H|^{3/2}+2c|G:H|^{3/2}<a|G:H|^{3/2}.$$
\end{proof}

The rest of the proof of Theorem~\ref{mains} follows the same idea as in the ``Case 2" above, but taking in account that the whole group $G$ is soluble.
\begin{proof}[Proof of Theorem~$\ref{mains}$]
Since $G=L_k$ and $I=N^k$, we may write $G=I\rtimes K$, where $K$ is a complement of $N$ in $L$. As in the proof of Theorem~\ref{maing} for the case that $N$ is abelian, we have that the action of $L$ by conjugation on $N$ endows $N$ of the structure of an $L$-module. Since $L$ is primitive, $N$ is irreducible. Set $q:=|\mathrm{End}_L(N)|$. Now, $N$ is a vector space over the finite field $\mathbb{F}_q$ with $q$ elements, and hence $|N|=q^{k'}$, for some positive integer $k'$.

Let $C\in \mathcal{C}$ and let $M\in \mathcal{M}_C$. As we have observed above (for the proof of ``Case~2''), $M\cap I=C$ is a maximal $L$-submodule of $G$, $H\cap I=1$ and $|I|=|N|^k=q^{kk'}$ divides $|G:H|$. In particular, $|G:H|=\ell q^{kk'}$, for some positive integer $\ell$.

 Since $G$ is soluble and since $M$ is a maximal subgroup of $G$ supplementing $I$, we have $M=C\rtimes K^x$, for some maximal $L$-submodule $C$ of $I$ and some $x\in I$.
Arguing as in the proof of Theorem~\ref{maing} for the case that $N$ is abelian, we deduce that we have at most $(q^k-1)/(q-1)$ choices for $C$. Moreover, we have at most $|I/C|=|G:M|=|N|=q^{k'}$ choices for $x$.
This yields 
\begin{equation}\label{eq:refinebis}\rho\le \frac{q^k-1}{q-1}q^{k'}.
\end{equation}

Now,~\eqref{eq:sigmas} gives $\sigma\le |G:H|/|D:D\cap T|-1$: recall that $D=I=N^k$ and $D\cap T=D\cap H=I\cap H=1$. Thus $\sigma\le |G:H|/|D|-1=|G:H|/q^{kk'}-1=\ell-1$. Therefore,
\begin{equation}\label{eq:tired}
\max(H,G)=\sigma+\rho\le \ell-1+\frac{q^k-1}{q-1}q^{k'}.
\end{equation}

When $\ell \ge 2$, a computation shows that the right hand side of~\eqref{eq:tired} is less than or equal to $\ell q^{kk'}-1=|G:H|-1$. In particular, we may suppose that $\ell=1$.  In this case, $|G:H|=q^{kk'}=|I|$ and hence $G=IH=I\rtimes H$. Moreover, $\sigma=0$. Since $H$ is not a maximal subgroup of $G$ (recall the base case for our inductive argument), $k\ge 2$.

Assume also $k'=1$. Since $|\mathrm{End}_L(N)|=q=|N|$, we deduce that $L/N$ is isomorphic to a subgroup of the multiplicative group of the field $\mathbb{F}_q$ and hence $|L:N|$ is relatively prime to $q$. Therefore $|G:I|$ is relatively prime to $q$ and hence so is $|H|$. Therefore, replacing $H$ by a suitable $G$-conjugate, we may suppose that $K=H$. Using this information, we may now refine our earlier argument bounding $\rho$. Let $C\in \mathcal{C}$ and let $M\in \mathcal{M}_C$. Since $G=I\rtimes H$ is soluble,  $M$ is a maximal subgroup of $G$ supplementing $I$ and $H\le M$, we have $M=C\rtimes H$, for some maximal $L$-submodule $C$ of $I$. We deduce that we have at most $(q^k-1)/(q-1)$ choices for $C$ and hence we have at most $(q^k-1)/(q-1)$ choices for $M$. This yields
$$\max(H,G)=\sigma+\rho=\rho\le \frac{q^k-1}{q-1}\le q^k-1=|G:H|-1,$$
and the result is proved in this case.

Assume $k'\ge 2$. A computation (using $\ell=1$ and $k,k'\ge 2$) shows that the right hand side of~\eqref{eq:tired} is less than or equal to $q^{kk'}-1=|G:H|-1$. 
\end{proof}

\end{document}